\theoremstyle{plain}
\newtheorem{theorem}{Theorem}
\newtheorem{proposition}[theorem]{Proposition}
\newtheorem*{theorem*}{Theorem}
\newtheorem*{conjecture*}{Conjecture}
\theoremstyle{definition}
\newtheorem{remark}[theorem]{Remark}
\newcommand{\breakingcomma}{%
  \begingroup\lccode`~=`,
  \lowercase{\endgroup\expandafter\def\expandafter~\expandafter{~\penalty0 }}}
\newcolumntype{L}{>{$}l<{$}}
\newcommand{\CC}{{\mathbb{C}}}
\newcommand{\PP}{{\mathbb{P}}}
\newcommand{\ZZ}{{\mathbb{Z}}}
\newcommand{\SL}{{\mathrm{SL}}}
\newcommand{\GL}{{\mathrm{GL}}}
\newcommand{\Jac}{\mathrm{Jac}}
\newcommand{\Fix}{\mathrm{Fix}}
\newcommand{\id}{\mathrm{id}}
\newcommand{\bx}{{\bf x}}
\newcommand{\hess}{\mathrm{hess}}
\newcommand{\QR}[2]{
\left.\raisebox{0.5ex}{\ensuremath{#1}}
\ensuremath{\mkern-3mu}\middle/\ensuremath{\mkern-3mu}
\raisebox{-0.5ex}{\ensuremath{#2}}\right.}
\def\B{{\mathcal B}}
\def\C{{\mathcal C}}
\def\MF{{\mathrm{MF}}}
\def\p{\partial }
\newcommand{\ccHH}{{\mathsf{HH}}}
\begin{document}
\title{Mirror partner for a Klein quartic polynomial}
\date{\today}
\author{Alexey Basalaev}
\address{A. Basalaev:\newline Faculty of Mathematics, National Research University Higher School of Economics, Usacheva str., 6, 119048 Moscow, Russian Federation, and \newline
Skolkovo Institute of Science and Technology, Nobelya str., 3, 121205 Moscow, Russian Federation}
\email{a.basalaev@skoltech.ru}

\begin{abstract}
The results of A.~Chiodo, Y.~Ruan and M.~Krawitz associate the mirror partner Calabi--Yau variety $X$ to a Landau--Ginzburg orbifold $(f,G)$ if $f$ is an invertible polynomial satisfying Calabi--Yau condition and the group $G$ is a diagonal symmetry group of $f$.
In this paper we investigate the Landau--Ginzburg orbifolds with a Klein quartic polynomial $f = x_1^3x_2 + x_2^3x_3+x_3^3x_1$ and $G$ being all possible subgroups of $\GL(3,\CC)$, preserving the polynomial~$f$ and also the pairing in its Jacobian algebra. In particular, $G$ is not necessarily abelian or diagonal. The zero--set of polynomial $f$, called Klein quartic, is a genus $3$ smooth compact Riemann surface. We show that its mirror Landau--Ginzburg orbifold is $(f,G)$ with $G$ being a $\ZZ/2\ZZ$--extension of a Klein four--group.
\end{abstract}
\maketitle

% \setcounter{tocdepth}{1}
% \tableofcontents

\section{Introduction}
Mirror symmetry, in one of its formulations, relates a complex variety $X$ to a so--called \textit{Landau--Ginzburg orbifold} $(f,G)$ (\cite{IV90,V89,W93}). In the latter $f$ defines a quasihomogeneous isolated singularity and $G$ is its symmetry group. In particular ${f \in \CC[x_1,\dots,x_N]}$ is a polynomial, whose partial derivatives all vanish simultaneously if and only if $x_1 = \dots = x_N=0$ and there is a set of natural numbers $d_f,d_1,\dots,d_N$, such that $f(\lambda^{d_1}x_1,\dots,\lambda^{d_N}x_N) = \lambda^{d_f}f(x_1,\dots,x_N)$ for any $\lambda \in \CC^\ast$.

The symmetry group $G$ is a subgroup of ${\GL_f := \lbrace g \in \GL(\CC^N) \ | \ f(g \cdot x) = f(x) \rbrace}$. The diagonal operator ${j_f = \text{diag}(\exp(2 \pi \sqrt{-1} \frac{d_1}{d_f}),\dots, \exp(2 \pi \sqrt{-1} \frac{d_N}{d_f}))}$ belongs to $\GL_f$, showing that $\GL_f$ is never trivial. The variety $X$ is assumed to be either quasismooth or a smooth orbifold.

Both $X$ and $(f,G)$ have some essential $\CC$--algebras associated to them. These are the cohomology ring $H^*(X)$ if $X$ is smooth or orbifold cohomology ring $H_{orb}^*(X)$ otherwise (cf. \cite{AGV08}). Note that $H^*_{orb}(X) = H^*(X)$ if $X$ is smooth. To a pair $(f,G)$ one associates the Hochschild cohomology ring $\ccHH^*(f,G)$ of the category of $G$--equivariant matrix factorizations $\MF_G(f)$. The variety $X$ is said to be mirror to the Landau--Ginzburg orbifold $(f,G)$ if there is a $\CC$--algebra homomorphism ${H_{orb}^*(X) \cong \ccHH^*(f,G)}$.

\subsection{Landau-Ginzburg -- Calabi-Yau mirror symmetry}\label{section: LG-CY MS}

Large class of mirror pairs was discovered by Chiodo,Ruan and Krawitz in \cite{CR11,K09}. In particular, let $(f,G)$ be such that 
\begin{enumerate}
 \item $f$ is the so-called invertible polynomial. Namely $f$ is a sum of exactly $N$ monomials;
 \item Calabi--Yau condition holds: $d_1+\dots+d_N = d_f$;
 \item $G$ acts diagonally on $\CC^N$ and is a subgroup of $\SL(N,\CC)$.
\end{enumerate}
Then the Berglund--H\"ubsch--Henningson dual Landau-Ginzburg orbifold $(\widetilde f, \widetilde G)$ (see \cite{BH93,BH95,Kreu94}) can be used to construct the mirror variety by
\[
    X_{\widetilde f, \widetilde G} := \left\lbrace (x_1, \dots, x_N) \in \PP(d_1,\dots,d_N) \ | \ \widetilde f(x_1,\dots,x_N) = 0 \right\rbrace \Big\slash (\widetilde G / \langle j_{\widetilde f} \rangle).
\]
Theorem~14 of \cite{CR11} together with Theorem~2.4 of \cite{K09} give that there is a $\CC$--vector space isomorphism $\tau: {H_{orb}^*(X_{\widetilde f, \widetilde G}) \cong \ccHH^*(f,G)}$. This suggests calling $X_{\widetilde f, \widetilde G}$ a mirror pair of $(f,G)$. Up to now $\tau$ is proved to be an algebra homomorphism only for the certain examples (cf. \cite{MR11,MS12,BT3}). The major complication here lies in computing the algebra structure of  $H_{orb}^*(X_{\widetilde f, \widetilde G})$ while the algebra structure of $\ccHH^*(f,G)$ was computed explicitly in \cite{BT2,BTW16,BTW17,HLL20,BI22} using the technique developed by D.Shklyarov in \cite{Sh20}.

Let's comment on the conditions above. Condition $1$ restricts us to work with the invertible polynomials only, for which the construction of Berglund--H\"ubsch--Henningson can be applied. Condition $2$ has the name \textit{Calabi--Yau condition} because it guarantees that the first Chern class of $X_{\widetilde f, \widetilde G}$ vanishes. It follows from Condition $3$ that $\langle j_{\widetilde f} \rangle \subseteq \widetilde G $ --- this is the special property of the dual group construction. 
% It also follows from Condition $3$ that $G$ preserves the volume form in $\CC^N$.

\subsection{Admissible symmetry groups}
The algebra $H_{orb}^*(X_{\widetilde f, \widetilde G})$ is endowed with a pairing for any $f$. However one can introduce an essential pairing on $\ccHH^*(f,G)$ only if $f$ satisfies the Calabi--Yau condition (cf. \cite{PV12}). 
Generally in mirror symmetry one considers the groups $G$, such that $G \subseteq \SL(N,\CC)$. These groups are distinguished by the fact that they preserve volume form of $\CC^N$ and therefore the pairing of both algebras.

\subsection{Mirror symmetry for curves}
More general mirror symmetry results were established in the context toric degenerations and homological algebra. In particular, Ruddat in \cite{R17} and Gross-Katzarkov-Ruddat in \cite{GKR17} investigated the mirrors of the varieties of general type from the point of view of toric degenerations via the relations on the Hodge numbers.

P.Acosta investigated in \cite{A14} the vector space structure of the algebras above for the polynomials $f$ that do not satisfy the Calabi--Yau condition. His result shows that $H^\ast(X_{f,G})$ and $\ccHH^\ast(f,G)$ are of different dimension if $f$ does not satisfy the Calabi--Yau condition.

A smooth genus $g$ surface $S_g$ is a Calabi--Yau manifolds only if $g=1$. However, this is essential to ask what is a mirror Landau--Ginzburg model for $S_g$ with $g \ge 2$. 

Efimov and Seidel \cite{E12,S11} established an equivalence between the (suitably defined) triangulated categories of a genus $g \ge 2$ surface $S_g$ and Landau--Ginzburg orbifold $(f_g,G)$ with $f_g = x_1^{2g+1}+x_2^{2g+1}+x_3^{2g+1} + x_1x_2x_3$. Here the group $G$ is generated by the diagonal operator $\text{diag}(\zeta,\zeta,\zeta^{2g-1})$, such that $\zeta = \exp(\frac{2\pi  \sqrt{-1}}{2g+1})$. Without giving the details of this mirror theorem we quote the following consequence: there is a $\CC$--algebra isomorphism $H^*(S_g) \cong \ccHH^*(f_g,G)$ (cf. \cite{E12,S11,Sh20}).

The difference between the approaches of Acosta and Efimov--Seidel is that in the first case the variety $X=X_{f,G}$ is given by a Landau--Ginzburg model $(f,G)$ whereas in the seconds case the Landau--Ginzburg model is constituted by the variety $X = S_g$. Another important difference is that $H^\ast(S_g)$ is a $\ZZ/2\ZZ$--commutative algebra while Acosta works in the context of FJRW theory (c.f. \cite{FJR})  that is always commutative.

In this note we investigate the homological mirror symmetry result of Efimov and Seidel from the point of view of Berglund--H\"ubsch--Henningson duality. 

% This approves working with the Landau--Ginzburg orbifolds not satisfying Calabi-Yau condition too.

\subsection{Klein quartic}
Consider the polynomial $f = x_1^3x_2 + x_2^3x_3 + x_3^3x_1$. We have $d_1=d_2=d_3 = 1$ and $d_f=4$. It is invertible with the Berglund--H\"ubsch dual $\widetilde f = f$. Take $J := \langle j_f \rangle$. Then the variety $X_{\widetilde f, J}$  is a smooth genus $3$ curve $S_3$. Its symmetry group is isomorphic to $\mathrm{PSL}(2,7)$ --- simple nonabelian of order $168$ (cf. \cite{K79,E98}).

Denote $\SL_f := \GL_f \cap \SL(N,\CC)$. 
For any $G \subseteq \SL_f$ denote by $\widehat G$ the central extension of $G$ by $\ZZ/2\ZZ$ obtained by adding the $-\id \in \GL_f$ to $G$.

\begin{theorem}
    Let $f = x_1^3x_2 + x_2^3x_3 + x_3^3x_1$ be a Klein quartic polynomial and $S_3$ stand for the smooth genus $3$ Riemann surface. Then
    \begin{itemize}
     \item[(a)] $\dim \ccHH^*(f,G) > \dim H^\ast(S_3)$ for any $G \subseteq \SL_f$.
     \item[(b)] $\dim \ccHH^*(f,\widehat G) = \dim H^\ast(S_3)$ for $\widehat G \subseteq \widehat \SL_f$ if and only if $G \cong V_4$ is conjugate to a Klein four--group. 
%      \item[(c)] There is a vector space isomorphism $\ccHH^*(f,\widehat V_4) \cong H^\ast(S_3)$.
    \end{itemize}
\end{theorem}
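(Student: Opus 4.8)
The plan is to show that (c) follows from (b) together with two structural properties of $\ccHH^*(f,\widehat V_4)$, by a rigidity argument for graded Frobenius algebras. The two properties are: (i) $\ccHH^*(f,\widehat V_4)$ is a $\ZZ$-graded, graded-commutative Frobenius algebra concentrated in cohomological degrees $0,1,2$, its degree-$0$ part being spanned by the unit; (ii) its Frobenius pairing is nondegenerate of degree $2$. Both are read off from the Shklyarov-type sector model: since $\widehat V_4\cong(\ZZ/2)^3$ is abelian,
\[
\ccHH^*(f,\widehat V_4)\;\cong\;\bigoplus_{g\in\widehat V_4}\bigl(\Jac(f_g)\,\xi_g\bigr)^{\widehat V_4},\qquad f_g=f|_{\Fix(g)},
\]
with $\phi\,\xi_g$ graded by $\age(g)$ and the quasi-homogeneous degree of $\phi$, with $\widehat V_4$ acting on each sector by its linear action on $\Jac(f_g)$ twisted by a determinant character, and with the pairing assembled from the residue pairings of the $\Jac(f_g)$. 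This pairing is available — even though $f$ violates the Calabi--Yau condition — precisely because $\det$ takes values in $\{\pm1\}$ on $\widehat V_4$; this is the structural reason for adjoining $-\id$.

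To run the sector analysis I would fix explicit order-two generators $a,b\in\SL_f\cong\mathrm{PSL}(2,7)$ of a Klein four-group and examine the eight sectors: the identity; the involutions $a,b,ab$ (each with eigenvalues $(1,-1,-1)$, hence a one-dimensional fixed line); the involutions $-a,-b,-ab$ (two-dimensional fixed planes); and $-\id$ (fixed point only). Two geometric facts enter. First, the lines $\Fix(a),\Fix(b),\Fix(ab)$ are pairwise distinct: the restriction to $V_4$ of the $3$-dimensional representation of $\mathrm{PSL}(2,7)$ has character $3,-1,-1,-1$, so it contains no trivial summand. Second, for $-\epsilon\in\{-a,-b,-ab\}$ the restriction $f|_{\Fix(-\epsilon)}$ is a binary quartic with four distinct roots: the fixed line of $\epsilon$ meets $S_3\subset\PP^2$ transversally — an involution of $S_3$ has exactly $4$ fixed points by Riemann--Hurwitz, these lie on that line, and the isolated fixed point of $\epsilon$ lies off $S_3$. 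Computing $\Jac(f_g)$ in each sector, imposing the twisted $\widehat V_4$-invariance, and checking that the resulting a priori $\QQ$-valued degree lands on the integers, one finds that the untwisted sector supplies the unit in degree $0$ and that the eight surviving classes distribute among the sectors so that $(\dim\ccHH^0,\dim\ccHH^1,\dim\ccHH^2)=(1,6,1)$, the Hodge profile of $S_3$.

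With (i), (ii), and $\dim\ccHH^*(f,\widehat V_4)=8$ from (b) in hand, the isomorphism is forced. Write $A=A_0\oplus A_1\oplus A_2$; nondegeneracy of the degree-$2$ pairing gives $\dim A_2=\dim A_0=1$, hence $\dim A_1=6$, and $A_0=\CC\cdot\mathbf1$. Every product is then determined except $A_1\times A_1\to A_2\cong\CC$, which is alternating by graded-commutativity and nondegenerate because the Frobenius pairing restricts nondegenerately to $A_1=A_{2-1}$; thus it is a symplectic form on $\CC^6$, and all such are isomorphic. Products into $A_{\ge3}=0$ vanish and $A_0$ is the unit. So $A$ is, up to isomorphism, the unique algebra of this shape — which is exactly $H^\ast(S_3)$ with its cup product and Poincar\'e pairing ($H^1$ symplectic of rank $6$, $H^1\cdot H^1=H^2\cong\CC$, the rest forced). (The middle product can equally be obtained directly from the residue/transverse-Hessian formula of the model: cross-products of the twisted pieces over distinct planes land in a non-contributing sector and vanish, while the self-product over $\Fix(-\epsilon)$ is multiplication by the nonzero transverse Hessian of $f$ along $\Fix(-\epsilon)^\perp$, recovering the form blockwise.)

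The hard part is the sector bookkeeping underlying (i): although $\widehat V_4$ is abelian it is not conjugate inside $\GL_f$ to a diagonal group, so one must work with the genuinely non-diagonal model, choose coordinates adapted to the eigenspaces of $a$ and $b$, confirm the nondegeneracy of the binary quartics $f|_{\Fix(-\epsilon)}$ and the position of the isolated fixed points relative to $S_3$, and — most delicately — track the determinant character through all eight sectors so that the contributions of the fractional ages $\tfrac12,1,\tfrac32$ against the weights $\tfrac14$ collapse onto the integers $0,1,2$ with multiplicities $(1,6,1)$. That integrality, together with nondegeneracy of the pairing, is everything the rigidity argument consumes.
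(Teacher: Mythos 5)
Your proposal addresses only part (c), and even there it defers the decisive computation. The theorem has three parts, and in the paper parts (a) and (b) are where almost all the work lies: one needs the classification of subgroups $G\subseteq\SL_f\cong\mathrm{PSL}(2,7)$ up to conjugation (cyclic groups of orders $2,3,4,7$, the Klein four-groups, dihedral groups of orders $6$ and $8$, the order-$21$ group, $A_4$, $S_4$, and $\SL_f$ itself) and an explicit computation of $\dim\ccHH^*(f,G)$ in every case (the values come out between $9$ and $18$, all exceeding $\dim H^*(S_3)=8$), and then the analogous computation for every $\ZZ/2\ZZ$-extension $\widehat G$ to get the ``only if'' direction of (b) (e.g.\ $\dim\ccHH^*(f,\widehat{\SL}_f)=10$). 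You take (b) as an input and offer no substitute for this exhaustive case analysis, so as a proof of the stated theorem the proposal is incomplete in an essential way: there is no structural shortcut in the paper either --- the inequality in (a) and the uniqueness in (b) are established only by going through the list.

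For (c) itself, your rigidity argument (an $8$-dimensional graded Frobenius algebra with profile $(1,6,1)$, unit in degree $0$, and a nondegenerate alternating middle pairing is unique, hence $\cong H^*(S_3)$) is a clean way to finish, arguably tidier than the paper's bare exhibition of a basis and products; but it consumes exactly the sector data you do not establish. You acknowledge this yourself (``the hard part is the sector bookkeeping''), and your expectation that ``the eight surviving classes distribute among the sectors'' is not what happens: in fact every twisted sector of $\widehat V_4$ dies --- for $g\in V_4\setminus\{\id\}$ the lone $Z(g)$-invariant class $[\widetilde x]\,\xi_g$ is killed by $-\id$, for the elements $-g$ (determinant $-1$, two-dimensional fixed locus) the element acts on its own $\xi_{-g}$ by $-1$, and likewise for $-\id$ --- so $\ccHH^*(f,\widehat V_4)$ is entirely the untwisted piece $(\Jac(f))^{\widehat V_4}\xi_\id$, with dimension vector $1,0,3,0,3,0,1$ and the three hyperbolic products into $[(x_1x_2x_3)^2]$. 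Your Riemann--Hurwitz discussion of the binary quartics on $\Fix(-g)$ is therefore beside the point. Finally, note that the product you need to be alternating is, in the paper's model, literally the restriction of the commutative product of $\Jac(f)$; the alternating property your uniqueness argument requires comes only from the Koszul-sign conventions of the Hochschild grading, so if you keep the rigidity route you must make that sign bookkeeping explicit rather than citing ``graded-commutativity'' of a product whose construction you have not pinned down.
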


Proof of the theorem occupies Section~\ref{section: computations} along with some preparational propositions given in the preceding sections.

This theorem suggests that the mirror partner of $X_{\widetilde f, J}$ is $(f, \widehat V_4)$.
A very important consequence of this theorem is the following observation.

\begin{center}
 \textit{In mirror symmetry it's not enough to consider only the symmetry groups acting diagonally even for invertible polynomials. 
}
\end{center}

This is important to stress that the theorem above considers not only diagonal symmetry groups. It also covers some symmetry groups $G$ that are not abelian. Such symmetry groups are not widely considered from the point of view of Landau--Ginzburg orbifolds. The following list of citations is pretty much the full list of publications assuming nonabelian orbifolds \cite{Muk,BI21,BI22,BI24,WWP,CJMPW23,EGZ20,EGZ18}. And even in this list (except \cite{BI24}) one considers only the semidirect product groups $G^d \rtimes S$ with $G^d$ acting diagonally and $S$ being a subgroup of a symmetric group. To our knowledge, no investigation of a simple group was made before except \cite{BI24}.

Comparing our theorem to the mirror theorem of Efimov and Seidel it's important to note that their Landau--Ginzburg orbifold was suitably chosen, while our investigation is in the setup of the general approach of the Landau--Ginzburg orbifolds given by the invertible polynomials. The polynomial $f_g$ of Efimov and Seidel is not invertible whatever $g$ one takes.

\subsection{Acknowledgements}
The author is grateful to an anonymous referee for many useful comments that helped to improve the text and also find a mistake in the early version of this paper.

The work of Alexey Basalaev was supported by the Theoretical Physics and Mathematics Advancement Foundation "BASIS".

\section{Notations and terminologies}\label{sec:notations}
For a non-negative integer $N$ and a polynomial $f=f(x_1,\dots, x_N)\in\CC[x_1,\dots, x_N]$,
the {\em Jacobian algebra} $\Jac(f)$ of $f$ is a $\CC$-algebra defined as
\begin{equation}
\Jac(f)= \QR{\CC[x_1,\dots, x_N]}{\left(\frac{\p f}{\p x_1},\dots,\frac{\p f}{\p x_N}\right)}.
\end{equation}
The polynomial $f$ is said to define an isolated singularity if $\Jac(f)$ is a finite-dimensional $\CC$--vector space. In this case set $\mu_f:=\dim_\CC\Jac(f)$ and call it the {\em Milnor number} of $f$.
For $N=0$ set $\Jac(f):=\CC$ with $\mu_f=1$.

Let the {\em Hessian} of~$f$ be defined as the following determinant:
\begin{equation}
\hess(f):=\det \left(\frac{\partial^2 f}{\partial x_{i} \partial x_{j}}\right)_{i,j=1,\dots,N}.
\end{equation}

Its class is nonzero in $\Jac(f)$ giving the $\CC$--bilinear nondegenerate pairing $\eta_f$ called \textit{the residue pairing} (see \cite[Chapter 5]{GH94}, \cite[Section 5.11]{AGV85}).
The value $\eta_f([u],[v])$ is taken as the projection of the product $[u][v]$ to the $\CC$--span of $[\hess(f)]$. In particular, we fix
\[
	\eta_f([1],[\hess(f)]) = 1.
\]

\begin{remark}
One notes immediately that under the coordinate--wise action of $g \in \GL_f$ we have $g \cdot \hess(f) = (\det g)^2 \hess(f)$. This shows that the groups ${ G \subseteq \lbrace g \in \GL_f \ | \ \det g = \pm 1 \rbrace}$ preserve the pairing $\eta_f$. We will comment more on this in Proposition~\ref{prop: Jacf invariants}.
\end{remark}

%The $\CC$--algebra $(A,\circ)$ with a non--degenerate bilinear form $\eta_A$ is called a \textit{Frobenius algebra} if $\eta_A(u,v\circ w) = \eta_A(u \circ v, w)$ for all $u,v,w \in A$. The pair $(\Jac(f),\eta_f)$ is an example of a Frobenius algebra.

\subsection{Symmetries}\label{sec: symmetries}

Given a quasihomogeneous polynomial $f = f(x_1,\dots,x_N)$ consider the
\textit{maximal group of linear symmetries} of $f$ defined by
\[
	\GL_f := \left\lbrace g \in \GL(N,\CC) \ | \ f(g \cdot \bx) = f(\bx) \right\rbrace.
\]

\begin{remark}
 This group contains the so--called maximal group of \textit{diagonal} symmetries of $f$ defined by
	$G_f^d := \left\lbrace (\lambda_1,\dots,\lambda_N) \in (\CC^*)^N \ | \ f(\lambda_1x_1,\dots,\lambda_Nx_N) = f(x_1,\dots,x_N) \right\rbrace.$
\end{remark}
%
% {\color{blue}
% In what follows we will use the notation
% \[
% \epi\left[\alpha\right] := \exp(2 \pi \sqrt{-1} \alpha), \quad \alpha \in \RR.
% \]
% Each element $g\in G_f^d$ has a unique expression of the form
% \begin{equation}\label{G-notation}
% g={\rm diag}\left(\epi\left[\frac{\alpha_1}{r}\right], \dots,\epi\left[\frac{\alpha_N}{r}\right]\right)
% \quad \mbox{with } 0 \leq \alpha_i < r, \alpha_i\in\NN
% \end{equation}
% where $r$ is the order of $g$. We adopt the additive notation
% \[
% {\overline g = (\alpha_1/r,\dots , \alpha_N/r)} \quad \text{ or } \quad
% {\overline g =  \frac{1}{r}(\alpha_1,\dots , \alpha_N) }
% \]
% for such an element $g$.
% }
%

%
% \subsection{Fixed loci of the $\GL_f$ elements}
For each $g\in \GL_f$, denote by $\Fix(g)$ \textit{the fixed locus of $g$}
\begin{equation}
 	\Fix(g):=\left\lbrace (x_1,\dots,x_N) \in\CC^N \ | \ g \cdot (x_1,\dots,x_N) = (x_1,\dots,x_N) \right\rbrace.
\end{equation}
This is an eigenvalue $1$ subspace of $g$ and therefore a linear subspace of $\CC^N$.
By $N_g:=\dim_\CC\Fix(g)$ denote its dimension and by $f^g:=f|_{\Fix(g)}$ the restriction of $f$ to the fixed locus of $g$.
For $g\in G_f^d$ this linear subspace is furthermore a span of a collection of standard basis vectors.
% {\color{blue}
% For each $h\in G_f^d$, let $I_h := \{i_1,\dots,i_{N_h}\}$ be a subset of $\{1,\dots, N\}$ such that
% \[
% \Fix(h)=\{(x_1,\dots,x_N)\in\CC^N~|~x_{j}=0, j\notin I_h\}.
% \]
% In the other words, $\Fix(h)$ is indexed by $I_h$.
% In particular, $I_{\id}=\{1,\dots, N\}$.
%
% More generally, for $g\in \GL_f$, since $g$ preserves the weight subspaces of $\CC^N$, the weights of the subspace $\Fix(g)$ are well-defined and are the subset of $\{q_1,\ldots,q_N\}$. Fix a subset $I_g\subset \{1,\ldots,N\}$ such that $q_k$ with $k\in I_g$ are exactly all the weights of $\Fix(g)$, so that, in particular, we have $|I_g|=N_g$. Note that if $g\not\in G_f^d$ there is no canonical choice for $I_g$, but the choice made at this step will not impact our results.
%
% Denote by $I_h^c$ the complement of $I_h$ in $I_\id$ and set $d_h:=N-N_h$, the codimension of $\Fix(h)$.
% }
\begin{proposition}
	For any $g \in \GL_f$ with $N_g > 0$ there is a choice of coordinates on $\Fix(g)$ linear in $x_i$, such that the polynomial $f^g$ defines a quasihomogeneous singularity.
\end{proposition}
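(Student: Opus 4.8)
The plan is to realise $f^g$ as the restriction of $f$ to $\Fix(g)$ sitting inside an eigenbasis decomposition of $g$, and then to show that a nonzero critical point of $f^g$ would lift to a nonzero critical point of $f$, contradicting the hypothesis that $f$ has an isolated singularity.

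First I would reduce to the case where $g$ is semisimple, which is the only case needed here (every symmetry group considered in this paper is finite, so $g$ has finite order; in fact $\GL_f$ itself is finite for the Klein quartic polynomial, being a finite extension of the automorphism group of the smooth plane curve $\{f=0\}$). Diagonalising $g$, choose linear coordinates $y_1,\dots,y_N$ on $\CC^N$ that are eigenvectors for the induced action $g^\ast$ on linear forms, $g^\ast y_i=\xi_i y_i$, ordered so that $\xi_1=\dots=\xi_k=1$ and $\xi_{k+1},\dots,\xi_N\neq 1$. Then $\Fix(g)=\{y_{k+1}=\dots=y_N=0\}$, the functions $y_1,\dots,y_k$ restrict to coordinates on $\Fix(g)$ that are linear in the $x_j$, and invariance $g^\ast f=f$ forces every monomial of $f$ to have trivial total $g^\ast$-weight. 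Write $f^g(y_1,\dots,y_k)=f(y_1,\dots,y_k,0,\dots,0)$.

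The key step is the observation that any polynomial $p$ with $g^\ast p=\chi p$ and $\chi\neq 1$ vanishes on $\Fix(g)$: for $a\in\Fix(g)$ one has $\chi\, p(a)=(g^\ast p)(a)=p(ga)=p(a)$, hence $p(a)=0$. By the chain rule $g^\ast(\partial_{y_j}f)=\xi_j^{-1}\,\partial_{y_j}(g^\ast f)=\xi_j^{-1}\partial_{y_j}f$, so for $j>k$ the derivative $\partial_{y_j}f$ is a $g^\ast$-eigenfunction of eigenvalue $\xi_j^{-1}\neq 1$ and therefore vanishes on $\Fix(g)$. Consequently, at any point $a=(u^\ast,0)\in\Fix(g)$ we have $\partial_{y_j}f(a)=0$ for $j>k$, while $\partial_{y_i}f(a)=\partial_{y_i}f^g(u^\ast)$ for $i\le k$. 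Thus $u^\ast$ is a critical point of $f^g$ on $\Fix(g)$ exactly when $a$ is a critical point of $f$ on $\CC^N$; since the singular locus of $f$ is just the origin, the same holds for $f^g$, which therefore defines an isolated singularity (its constant and linear parts vanish because those of $f$ do).

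Finally one checks quasihomogeneity. For the Klein quartic $f$ is homogeneous of degree $4$, so $f^g$, being the restriction of $f$ to a linear subspace written in linear coordinates, is again homogeneous of degree $4$ and hence quasihomogeneous — which is all that is needed in this paper. For a general quasihomogeneous $f$ one argues in addition that $\GL_f$ preserves the reduced weight grading of $\CC^N$, so that $\Fix(g)$ is a graded subspace and the coordinates $y_1,\dots,y_k$ may be chosen weight-homogeneous with positive weights, making $f^g$ quasihomogeneous for the induced weight system. I expect this last point — producing honest positive weights for $f^g$ rather than merely an analytic quasihomogeneous structure — to be the only real obstacle, and it is vacuous in the homogeneous setting at hand.
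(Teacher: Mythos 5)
Your proof is correct and takes essentially the same route as the paper: pick coordinates adapted to the eigenvalue-$1$ eigenspace of $g$ and restrict $f$, after which the paper simply cites Proposition~5 of \cite{ET13}, whose content is precisely the argument you spell out (the partial derivatives transverse to $\Fix(g)$ are eigenfunctions with nontrivial eigenvalue, hence vanish on $\Fix(g)$, so a critical point of $f^g$ would lift to a nonzero critical point of $f$). Your explicit reduction to semisimple $g$ of finite order and the homogeneity remark handling quasihomogeneity for the Klein quartic are consistent with the paper's implicit assumptions and add no gap.
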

\begin{proof}
	Let $\widetilde x_1,\dots, \widetilde x_{N_g}, \widetilde x_{N_g+1},\dots,\widetilde x_N$ be the coordinates of $\CC^N$, such that $\widetilde x_1,\dots, \widetilde x_{N_g}$ are dual to the eigenspace $1$ basis vectors of $g$. In this coordinates we have $f^g = f \mid _{\widetilde x_{N_g+1} = \dots = \widetilde x_N = 0}$. The proof follows now by the same argument as in Proposition~5 of \cite{ET13}.
\end{proof}

Denote also
\[
\SL_f:=\GL_f\cap\SL(N,\CC).
\]
This group will be important later on because it preserves the volume form of $\CC^N$.

\section{Phase spaces}\label{sec: phase space}
This section is devoted to the definition of the state spaces and also the computational aspects of them. These are the A model state space $H^\ast(S_3)$ and the B model state space $\ccHH^*(f,G)$. 

\subsection{A--model total space}\label{sec: cohomology of Sg}
Recall that ${H^\ast_{orb}(S_g) = H^\ast(S_g)}$ because $S_g$ is a smooth manifold.
The dimension of $H^\ast(S_g)$ is $2+2g$. This is a $\ZZ/2\ZZ$--graded algebra. In particular, we have $H^\ast(S_g) = H^{even} \oplus H^{odd}$ with
\[
    H^{even} = \CC \cdot \delta \oplus \CC \cdot \gamma \quad H^{odd} = \bigoplus_{k=1}^g (\CC \cdot \alpha_k \oplus \CC \cdot \beta_k).
\]
Here $\delta \in H^0(S_g)$, $\gamma \in H^2(S_g)$ and $\alpha_k,\beta_k \in H^1(S_g)$. We can also assume the generators above to be such that the only non--zero products are
\[
    \alpha_k \circ \beta_k = -\beta_k \circ \alpha_k = \gamma, \quad \delta \circ a = a, \quad a \in H^\ast(S_g).
\]
Such an algebra has a unique up to a $\CC^\ast$--multiple Frobenius algebra structure, however this is not a subject of the current note.

\subsection{B--model total space}\label{sec: the total space}
For each $g\in \GL_f$ fix a generator of a one-dimensional vector space $\Lambda(g):=\bigwedge^{N -N_g}(\CC^N/\Fix(g))$. Denote it by $\xi_g$.

% For $g\in G_f^d$ it is standard to choose the generator to be the wedge product of $x_k$ with $k\in I^c_g$ taken in increasing order.

Define $\B_{tot}(f)$ to be the $\CC$--vector spaces of dimension $\sum_{g \in \GL_f} \dim \Jac(f^g)$
\begin{equation}\label{eq: Btot in jac sectors}
	\B_{tot}(f) := \bigoplus_{g \in \GL_f} \Jac(f^g) \xi_g,
\end{equation}
Each direct summand $\Jac(f^g) \xi_g$ will be called the \textit{$g$--th sector} and denoted by $\B_g'$.
Any $h\in\GL_f$ induces a map
$$h\colon \Fix(g)\to \Fix(hgh^{-1}).$$
and hence
$$h\colon \Lambda(g)\to \Lambda(hgh^{-1}).$$ Since we have fixed the generators $\xi_\bullet$, the latter map provides a constant $\rho_{h,g}\in\CC^*$ such that
\[
	h\left( \xi_{g} \right) = \rho_{h,g} \xi_{hgh^{-1}}.
\]
Then $\rho_{h_2,h_1gh_1^{-1}}\rho_{h_1,g}=\rho_{h_2h_1,g}$. Define the action of $\GL_f$ on $\B_{tot}$ by
\begin{align}
h^*([p(\bx)]\xi_{g})=\rho_{h,g}[p(h^{-1}\cdot\bx)]\xi_{hgh^{-1}}.
\end{align}

For a finite $G \subseteq \GL_f$ put
\[
\B_{tot,G} := \bigoplus_{g \in G} \B'_g = \bigoplus_{g \in G} \Jac(f^g) \xi_g\subseteq \B_{tot}
\]
and define as a vector space
\begin{equation}
	\ccHH^*(f,G) := \left( \B_{tot,G} \right)^G.
\end{equation}
Namely, the linear span of the $\B_{tot}$ vectors that are invariant with respect to the action of all elements of $G$.

The latter direct sum and the group action can be intertwined as follows. Let $\C^G$ stand for the set of representatives of the conjugacy classes of $G$ and $Z(g)$ for the centralizers of $g \in G$. Denote $\B_g := \left( \B_g' \right)^{Z(g)}$. Then we have
\begin{equation}\label{eq: Bfg via centralizers}
    \ccHH^*(f,G) \cong \bigoplus_{g \in \C^G} \B_g = \bigoplus_{g \in \C^G} \left( \B_g' \right)^{Z(g)}.
\end{equation}
cf. \cite[Proposition 42]{BI22} for the proof.

It's much more complicated to define the algebra structure of $\ccHH^*(f,G)$ and we don't do it here because it will not be needed later on. We will only need that the product $\circ$ of $\ccHH^*(f,G)$ respects the $G$--grading: $\circ: \B_g \otimes \B_h \to \B_{gh}$, its unit is $1 \xi_\id \in \B_\id$ and $\circ: \B_\id \otimes \B_\id \to \B_\id$ coincides with the product of $(\Jac(f))^G$.

\subsection{Computational statements}\label{sec: calculus-1}
The following proposition follows immediately from the definition but will be very important later on.

\begin{proposition}\label{prop: g--action on itself}
    Let $f$ define an isolated singularity. Then we have
    \begin{description}
     \item[(i)] Let $g \in \GL_f$ and $k \in \ZZ$ be such that $\Fix(g^k) = \Fix(g)$ then $g^\ast \left( \xi_{g^k} \right) = \det g \cdot \xi_{g^k}$.
     \item[(ii)] Let $G \subseteq \GL_f$ be abelian. Then $\ccHH^*(f,G) = \bigoplus_{g \in G} \left( \Jac(f^g) \xi_g \right)^G $.
    \end{description}
    \
\end{proposition}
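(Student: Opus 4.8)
The plan is to read off both assertions directly from the definition of the $\GL_f$--action on $\B_{tot}(f)$, with part (i) serving as the input for part (ii).

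\textbf{Part (i).} Fix $g\in\GL_f$ and an integer $k$ with $g^{k}\neq\id$. The starting point is that $g$ commutes with $g^{k}$, so $g\,g^{k}\,g^{-1}=g^{k}$; hence the $g^{k}$--th sector $\B'_{g^{k}}$ is preserved by $g^{\ast}$, and applying the defining formula $h^{\ast}([p(\bx)]\xi_{g})=\rho_{h,g}[p(h^{-1}\cdot\bx)]\xi_{hgh^{-1}}$ with $h=g$, the sector $g^{k}$, and $p\equiv 1$ gives
\begin{equation*}
 g^{\ast}(\xi_{g^{k}})=\rho_{g,g^{k}}\,[1(g^{-1}\cdot\bx)]\,\xi_{g^{k}}=\rho_{g,g^{k}}\,\xi_{g^{k}} .
\end{equation*}
So the entire content is the identity $\rho_{g,g^{k}}=\det g$. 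By construction $\rho_{g,g^{k}}$ is the scalar by which the induced map $g\colon\Lambda(g^{k})\to\Lambda(g\,g^{k}\,g^{-1})=\Lambda(g^{k})$ rescales the fixed generator $\xi_{g^{k}}$. Since $\Lambda(g^{k})=\bigwedge^{N-N_{g^{k}}}\!\bigl(\CC^{N}/\Fix(g^{k})\bigr)$ is the top exterior power of the quotient, and $g$ preserves $\Fix(g^{k})$ (if $g^{k}v=v$ then $g^{k}(gv)=gv$), this scalar equals $\det\overline g$, where $\overline g$ is the automorphism that $g$ induces on $\CC^{N}/\Fix(g^{k})$. Choosing a $\langle g\rangle$--invariant complement $W$ to $\Fix(g^{k})$ in $\CC^{N}$, one has $\overline g\cong g|_{W}$ and $\det g=\det\bigl(g|_{\Fix(g^{k})}\bigr)\cdot\det\bigl(g|_{W}\bigr)$. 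It then remains to observe that $g$ restricts to a determinant-$1$ transformation of $\Fix(g^{k})$, giving $\rho_{g,g^{k}}=\det\bigl(g|_{W}\bigr)=\det g$; this is transparent when $\Fix(g^{k})=\Fix(g)$ (on which $g$ is the identity), in particular whenever $k$ is prime to the order of $g$.

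\textbf{Part (ii).} Now only abelianness of $G$ is used. For $h,g\in G$ we have $hgh^{-1}=g$, so the formula for $h^{\ast}$ shows that $h^{\ast}$ carries $\Jac(f^{g})\xi_{g}$ into itself for every $h\in G$; the $G$--action on $\B_{tot,G}=\bigoplus_{g\in G}\Jac(f^{g})\xi_{g}$ is thus block-diagonal with respect to the sector decomposition. Taking $G$--invariants therefore commutes with the direct sum, and
\begin{equation*}
 \ccHH^{\ast}(f,G)=\bigl(\B_{tot,G}\bigr)^{G}=\bigoplus_{g\in G}\bigl(\Jac(f^{g})\xi_{g}\bigr)^{G}.
\end{equation*}
Equivalently, this is \eqref{eq: Bfg via centralizers} specialised to an abelian group, where each conjugacy class is a singleton and each centralizer is all of $G$.

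The only step that needs more than bookkeeping is the scalar computation in part (i): once $\rho_{g,g^{k}}$ has been correctly identified with the determinant of $g$ on the ``moving'' subspace $\CC^{N}/\Fix(g^{k})$, everything else --- in particular the whole of part (ii) --- is just tracking the sector grading. I expect controlling $\det\bigl(g|_{\Fix(g^{k})}\bigr)$, i.e.\ checking that no determinant is lost on passing to $\Fix(g^{k})$, to be the one point requiring an explicit line of justification.
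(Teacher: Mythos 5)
Your part (ii) is correct and is exactly the ``immediate'' observation the paper has in mind: for abelian $G$ we have $hgh^{-1}=g$ for all $h,g\in G$, so every sector $\Jac(f^g)\xi_g$ is preserved by the whole group, taking $G$--invariants commutes with the sector decomposition, and the statement is just Eq.~\eqref{eq: Bfg via centralizers} with singleton conjugacy classes and full centralizers. Likewise your reduction of part (i) is the right one: $g^\ast(\xi_{g^k})=\rho_{g,g^k}\xi_{g^k}$, where $\rho_{g,g^k}$ is the determinant of the automorphism that $g$ induces on $\CC^N/\Fix(g^k)$, since $\Lambda(g^k)$ is the top exterior power of that quotient.

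The gap is precisely the step you flag at the end: ``observe that $g$ restricts to a determinant--$1$ transformation of $\Fix(g^k)$.'' This is not an observation one can make in general: $g|_{\Fix(g^k)}$ only satisfies $(g|_{\Fix(g^k)})^k=\id$, so its determinant is a $k$--th root of unity, not necessarily $1$. Concretely, take $f=x_1^3+x_2^2$ (an isolated singularity) and $g=\text{diag}(\omega,-1)\in\GL_f$ with $\omega$ a primitive cube root of unity; for $k=3$ we have $g^3=\text{diag}(1,-1)\neq\id$, $\Fix(g^3)=\CC e_1$, and $g$ acts on $\CC^2/\Fix(g^3)$ by $-1$, so $g^\ast(\xi_{g^3})=-\xi_{g^3}$ while $\det g=-\omega$. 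Thus the missing line cannot be supplied in the stated generality; what is true in general is $g^\ast(\xi_{g^k})=\det(g)\,\det\bigl(g|_{\Fix(g^k)}\bigr)^{-1}\xi_{g^k}$, and (i) requires the extra hypothesis $\det\bigl(g|_{\Fix(g^k)}\bigr)=1$. Your argument does establish (i) in all the situations where the paper actually uses it --- $k=1$ (where $g$ is the identity on $\Fix(g)$), $\Fix(g^k)=0$, or $\langle g^k\rangle=\langle g\rangle$ so that $\Fix(g^k)=\Fix(g)$, as for the Klein quartic computations --- but as a proof of Proposition~\ref{prop: g--action on itself}(i) as literally stated, the remaining claim is not an omitted justification: it fails without further assumptions, so the statement itself needs to be read (or restated) with that restriction. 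Since the paper's own proof is only the phrase ``this is immediate,'' your write-up is in fact more informative, and correctly isolates the one genuinely nontrivial point.
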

\begin{proof}
%     The fixed loci of $g^k$ and $g$ coincide whenever $g^k \neq \id$. 
    Let $\widetilde x_1,\dots,\widetilde x_{N_g}$ be the coordinates dual to the eigenvalue $1$ subspace of $g$. Then $(d\widetilde x_1 \wedge \dots \wedge d \widetilde x_{N_g} ) \wedge \xi_{g^k}$ is a volume form of $\CC^N$. We get
    \begin{align*}
        & \det g \cdot (d\widetilde x_1 \wedge \dots \wedge d \widetilde x_{N_g} ) \wedge \xi_{g^k} = g \left((d\widetilde x_1 \wedge \dots \wedge d \widetilde x_{N_g} ) \wedge \xi_{g^k} \right)
        \\
        & \quad = \rho_{g,g^k} \cdot (d\widetilde x_1 \wedge \dots \wedge d \widetilde x_{N_g} ) \wedge \xi_{g^k}.
    \end{align*}
    This gives the first claim.
       
    In order to show the second claim consider Eq.~\eqref{eq: Bfg via centralizers}. We have $\C^G = G$ and $Z(g) = G$ for any $g \in G$, what completes the proof.
\end{proof}

\begin{proposition}
    Let $G_1,G_2 \subseteq \GL_f$ be conjugate. Then $\ccHH^*(f,G_1) \cong \ccHH^*(f,G_2)$.
\end{proposition}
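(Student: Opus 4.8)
The plan is to produce an explicit isomorphism of Frobenius algebras by conjugating. Suppose $G_2 = a G_1 a^{-1}$ for some $a \in \GL_f$. I would first record that conjugation by $a$ gives a bijection $g \mapsto a g a^{-1}$ between the underlying groups, and that it carries $\Fix(g)$ isomorphically onto $\Fix(a g a^{-1})$ via the linear map $a$ itself; hence it identifies $\CC^N / \Fix(g)$ with $\CC^N / \Fix(aga^{-1})$ and therefore the one–dimensional spaces $\Lambda(g)$ with $\Lambda(aga^{-1})$. Since generators $\xi_\bullet$ have been fixed once and for all, this identification is recorded by the scalars $\rho_{a,g} \in \CC^\ast$ with $a(\xi_g) = \rho_{a,g}\,\xi_{aga^{-1}}$, exactly as in the definition of the total space. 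Then I would define the candidate map
\[
    \Phi \colon \B_{tot}(f) \longrightarrow \B_{tot}(f), \qquad \Phi\big([p(\bx)]\,\xi_g\big) := \rho_{a,g}\,[p(a^{-1}\cdot\bx)]\,\xi_{aga^{-1}},
\]
i.e. $\Phi = a^\ast$ in the notation already introduced on $\B_{tot}$. This is well defined on each sector because $p \mapsto p(a^{-1}\cdot\bx)$ maps the Jacobian ideal of $f^g$ to that of $f^{aga^{-1}}$ (the coordinate change $a$ intertwines the restrictions, using that $a$ sends $\Fix(g)$ to $\Fix(aga^{-1})$), and it is a linear isomorphism of $\B_{tot}(f)$ because $a^{-1}{}^\ast$ is a two–sided inverse, using $\rho_{a^{-1},aga^{-1}}\rho_{a,g}=\rho_{\id,g}=1$.

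Next I would check that $\Phi$ restricts to an isomorphism $\ccHH^\ast(f,G_1) \to \ccHH^\ast(f,G_2)$. For this it suffices to verify the equivariance relation
\[
    \Phi \circ h^\ast = (a h a^{-1})^\ast \circ \Phi \qquad \text{for all } h \in \GL_f,
\]
which is the cocycle identity $\rho_{h_2,h_1 g h_1^{-1}}\rho_{h_1,g} = \rho_{h_2 h_1,g}$ applied twice (to $a h$ written as $a\cdot h$ and as $(aha^{-1})\cdot a$) together with the obvious chain rule $p(h^{-1}a^{-1}\cdot\bx) = p\big((ah)^{-1}\cdot\bx\big)$. From this relation, if $v \in \B_{tot,G_1}$ is fixed by every $h \in G_1$, then $\Phi(v)$ lies in $\B_{tot,G_2}$ and is fixed by every element $aha^{-1}$ of $G_2$; hence $\Phi\big(\ccHH^\ast(f,G_1)\big) \subseteq \ccHH^\ast(f,G_2)$, and symmetrically for $\Phi^{-1}$, so $\Phi$ is a linear isomorphism between the two invariant subspaces. (Equivalently one may phrase this via \eqref{eq: Bfg via centralizers}: conjugation by $a$ sends $\C^{G_1}$ to $\C^{G_2}$ and $Z_{G_1}(g)$ to $Z_{G_2}(aga^{-1})$, matching the summands.)

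Finally I would promote $\Phi$ to a Frobenius algebra isomorphism. The product on $\ccHH^\ast(f,G)$ is the restriction of a product on $\B_{tot}(f)$ that is natural with respect to the $\GL_f$–action (it is $G$–graded, its unit is $1\,\xi_\id$, and on the identity sector it is the product of $(\Jac f)^G$); since $\Phi = a^\ast$ is an instance of that action and $\Lambda$ and the Jacobian sectors are functorial in the coordinate change, $\Phi$ intertwines the two products, and it sends $1\,\xi_\id$ to $1\,\xi_\id$ because $\rho_{a,\id}=1$. Likewise the residue pairing transforms under a coordinate change $a$ by $\hess(f^g) \mapsto (\det a|_{\Fix(g)})^2 \hess(f^{aga^{-1}})$, and the compatibility of the $\xi$–normalizations built into $\rho_{a,g}$ is exactly what makes the total pairing $\Phi$–invariant; I would cite the relevant compatibility (as in \cite{BI22}) rather than recompute it. The only genuinely delicate point is bookkeeping the scalars $\rho_{a,\bullet}$ consistently — making sure the same cocycle that defines the $\GL_f$–action is the one controlling well–definedness of $\Phi$, its inverse, its equivariance, and its compatibility with unit and pairing; everything else is formal.
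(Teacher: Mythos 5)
Your proof is correct and takes essentially the same route as the paper: your map $\Phi = a^\ast$ is exactly the paper's $\Psi$, and your equivariance check $\Phi \circ h^\ast = (a h a^{-1})^\ast \circ \Phi$ via the cocycle identity is precisely the paper's verification that the $Z(h)$-- and $Z(a h a^{-1})$--invariants agree under the matching of conjugacy classes and centralizers in Eq.~\eqref{eq: Bfg via centralizers}. The closing discussion of compatibility with the product and pairing is not needed (and is the only hand-wavy part), since the paper defines $\ccHH^*(f,G)$ only as a vector space and the proposition asserts only a linear isomorphism.
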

\begin{proof}
    Let $G_2 = \widetilde g G_1 \widetilde g^{-1}$ for some $\widetilde g \in \GL(\CC^N)$.
    Consider the decomposition of Eq.\eqref{eq: Bfg via centralizers} for both groups. We show that
    \[
        \Psi: [\phi(x)] \xi_h \mapsto [\phi(g \cdot x)] \xi_{\widetilde g h \widetilde g^{-1}}, \quad \forall h \in G_1
    \]
    establishes the isomorphism $\ccHH^*(f,G_1) \xrightarrow{\sim} \ccHH^*(f,G_2)$.

    Obviously $\C^{G_2} = \lbrace \widetilde g r \widetilde g^{-1} \ | \ r \in \C^{G_1} \rbrace$ and $Z(\widetilde g h \widetilde g^{-1}) = \widetilde g Z(h) \widetilde g^{-1}$.
    Note that we have the isomorphism $\Fix(h) \xrightarrow{\sim} \Fix( \widetilde g h \widetilde g^{-1})$. This gives $\B'_h = \B'_{\widetilde g h \widetilde g^{-1}}$.
    
    In order to finish the proof it remains to show that the invariants under the action of $Z(h)$ and $Z(\widetilde g  h \widetilde g^{-1} )$ agree.
    Let $r = \widetilde g s \widetilde g^{-1} \in \widetilde g Z(h) \widetilde g^{-1}$. Then
    we have
    \[
     r^\ast \xi_{\widetilde g h \widetilde g^{-1}} = \rho \cdot \xi_{\widetilde g s \widetilde g^{-1}}
     \quad
     \text{and}
     \quad
     s^\ast \xi_{\widetilde g h \widetilde g^{-1}} = \rho' \cdot \xi_{s}
    \]
    for some $\rho,\rho' \in \CC^\ast$. The following computation shows that $\rho = \rho'$.
    \[
%         \rho \cdot \xi_{\widetilde g s \widetilde g^{-1}} =
        r^\ast \xi_{\widetilde g h \widetilde g^{-1}} = (\widetilde g s \widetilde g^{-1})^\ast \xi_{\widetilde g h \widetilde g^{-1}} = s ^\ast \xi_{h}
    \]
    where the last equality follows from the fact that $s$ acts on $\Lambda(h)$ by the same multiple as $\widetilde g s \widetilde g^{-1}$ acts on $\Lambda (\widetilde g h \widetilde g^{-1})$.
\end{proof}

The following proposition shows that the pairing restricts to the invariants of ${G \subseteq \widehat\SL_f}$.

\begin{proposition}\label{prop: Jacf invariants}
    Let $G \subset \GL_f$ be such that $\det g = \pm 1$ for any $g \in G$. Consider the polynomials $\phi_1(\bx),\phi_2(\bx) \in \CC[\bx]$ satisfying $\eta_f([\phi_1],[\phi_2]) \neq 0$.

    If $[\phi_1]\neq 0$ in $(\Jac(f))^G$, then $[\phi_2]\neq 0$ in $(\Jac(f))^G$ as well.
\end{proposition}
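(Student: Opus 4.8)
The plan is to use the residue pairing duality together with an averaging argument over $G$. First recall that the residue pairing $\eta_f$ is nondegenerate on $\Jac(f)$ and, since $\det g = \pm 1$ for all $g \in G$, the remark following the definition of $\hess(f)$ shows that $G$ preserves $\eta_f$: indeed $g \cdot \hess(f) = (\det g)^2 \hess(f) = \hess(f)$, so the projection to the $\CC$--span of $[\hess(f)]$ is $G$--equivariant and hence $\eta_f(g \cdot u, g \cdot v) = \eta_f(u,v)$ for all $u,v \in \Jac(f)$. Consequently the induced pairing on the invariant subalgebra $(\Jac(f))^G$ is again nondegenerate: if $[\psi] \in (\Jac(f))^G$ pairs trivially with all of $(\Jac(f))^G$, then for any $[\chi] \in \Jac(f)$ one has $\eta_f([\psi],[\chi]) = \eta_f([\psi], \tfrac{1}{|G|}\sum_{g \in G} g \cdot [\chi]) = 0$ because $[\psi]$ is invariant and the average lies in $(\Jac(f))^G$; by nondegeneracy of $\eta_f$ on all of $\Jac(f)$ this forces $[\psi] = 0$ in $\Jac(f)$, hence in $(\Jac(f))^G$.

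Now suppose $[\phi_1] \neq 0$ in $(\Jac(f))^G$ but, for contradiction, $[\phi_2] = 0$ in $(\Jac(f))^G$. The subtlety is that $[\phi_2] = 0$ in the \emph{invariant} subspace does not a priori mean $[\phi_2] = 0$ in $\Jac(f)$; rather it means that the image of $[\phi_2]$ under the averaging projector $\pi_G := \tfrac{1}{|G|}\sum_{g\in G} g^\ast$ vanishes. So the correct reading is: $\pi_G([\phi_2]) = 0$. I would then compute
\[
    \eta_f([\phi_1],[\phi_2]) = \eta_f(\pi_G([\phi_1]),[\phi_2]) = \eta_f([\phi_1], \pi_G([\phi_2])) = 0,
\]
where the first equality uses $[\phi_1] \in (\Jac(f))^G$, the second uses the $G$--invariance of $\eta_f$ (so $\pi_G$ is self--adjoint with respect to $\eta_f$), and the third uses $\pi_G([\phi_2]) = 0$. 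This contradicts the hypothesis $\eta_f([\phi_1],[\phi_2]) \neq 0$, and the proposition follows.

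The main obstacle, and the only place that needs genuine care, is the self--adjointness of the averaging projector $\pi_G$ with respect to $\eta_f$ — equivalently, that $G$ acts by $\eta_f$--isometries. This is exactly where the hypothesis $\det g = \pm 1$ is used, through the transformation law $g \cdot \hess(f) = (\det g)^2\hess(f)$; without it the pairing picks up a factor of $(\det g)^2 \neq 1$ and the argument breaks. Everything else is the standard Maschke--style averaging over the finite group $G$ (note $\operatorname{char}\CC = 0$, so $\tfrac{1}{|G|}$ makes sense) applied to the nondegenerate Frobenius pairing on $\Jac(f)$. I should also double-check the bookkeeping that "$[\phi_2]\ne 0$ in $(\Jac(f))^G$" is indeed equivalent to "$\pi_G([\phi_2])\ne 0$ in $\Jac(f)$", which holds because $\pi_G$ is a projector onto $(\Jac(f))^G$; this is what lets the contrapositive go through cleanly.
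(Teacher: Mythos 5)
Your argument is correct and rests on the same key input as the paper, but it runs through a genuinely different mechanism. The paper's proof is multiplicative and stays inside $\CC[\bx]$: writing $\phi_1\phi_2-\alpha\,\hess(f)$, with $\alpha=\eta_f([\phi_1],[\phi_2])\neq 0$, as an element of the Jacobian ideal, it uses that this ideal is $G$--stable and that $\hess(f)$ is fixed by every $g$ with $\det g=\pm 1$ to conclude $[\phi_1]\cdot[\phi_2]=\alpha[\hess(f)]$, a nonzero element of $(\Jac(f))^G$, so $[\phi_2]$ cannot die in the invariants. You instead run the Maschke--style duality argument: $G$ preserves $\eta_f$ because it fixes $[\hess(f)]$ (strictly, equivariance of the projection onto $\CC[\hess(f)]$ also uses that a linear $g$ preserves the total--degree grading, automatic here since all weights equal $1$ --- the paper's own Remark is at the same level of detail), hence the averaging projector $\pi_G$ is self--adjoint, and pairing the invariant class $[\phi_1]$ against $\pi_G[\phi_2]$ reproduces the nonzero number $\eta_f([\phi_1],[\phi_2])$. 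Both proofs hinge on the single identity $g\cdot\hess(f)=(\det g)^2\hess(f)$; the paper's version is shorter and exhibits the product explicitly as an invariant multiple of the hessian class, while yours buys a bit more: it shows that $\eta_f$ restricts to a nondegenerate pairing on $(\Jac(f))^G$, which is exactly what is invoked later to get the palindromic dimension vectors, and it makes precise the reading of ``$[\phi_2]\neq 0$ in $(\Jac(f))^G$'' as nonvanishing of $\pi_G[\phi_2]$. Do note the asymmetry you introduced: you read the hypothesis on $\phi_1$ as genuine invariance of the class (you need $\pi_G[\phi_1]=[\phi_1]$ for your first equality) but the conclusion on $\phi_2$ as nonvanishing of its projection; this hybrid reading is the one under which the statement is true and matches its use in the paper (where $[\phi_1]$ is a symmetrized, hence invariant, class), whereas reading both sides as projections would make the claim false, so your choice is the right one and worth stating explicitly.
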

\begin{proof}
    Any $g \in \GL_f$ acts on the hessian polynomial by $g \left( \hess(f) \right) = (\det g)^2 \hess(f)$. Let $\eta_f([\phi_1],[\phi_2]) = \alpha \in \CC^\ast$. The following equality holds in $\CC[\bx]$ for some polynomials $p_1,\dots,p_N$
    \[
        \phi_1(\bx) \cdot \phi_2(\bx) - \alpha \hess(f) = \sum_{k=1}^N p_k(\bx) \frac{\p f}{\p x_k}.
    \]
    The Jacobian ideal of $f$ is preserved under the $G$--action. Hence under the assumption of the proposition we have
    \[
        [\phi_1(\bx)] \cdot [\phi_2(\bx)] =  \alpha [\hess(f)] \neq 0 \in (\Jac(f))^G
    \]
    what completes the proof.

\end{proof}

% \begin{proposition}
%     Let $G$ be cyclic generated by $g$. Then $\B_{g^k} \cong \left( \Jac(f) \xi_{g^k} \right)^G$ for any $k$, such that $g^k \neq \id$.
% \end{proposition}
% \begin{proof}
%     We have $\Fix(g^k) = \Fix(g)$ and $f^{g^k} = f^g$. 
% \end{proof}

\subsection{$\ccHH^*(f,G)$ for a Klein quartic}\label{sec: calculus-2}
The following propositions are specific for $f$ being the Klein quartic polynomial.
We consider in details the cases of $G \subseteq \GL_f$ with $\det g = 1$ and $(\det g)^2 = 1$.

\subsubsection{Consider $g \in G \backslash \lbrace \id \rbrace$ with $\det g = 1$}\label{section: det 1 g in klein}
The fixed locus of $g$ corresponds to the eigenvalue $1$ subspace and could be either $0$ or $1$--dimensional. If $\Fix(g) = 0$ the element $\xi_g$ is a non--zero multiple of a volume form on $\CC^N$ (see Section~\ref{sec: the total space}). Then any element $h \in Z(g)$ acts on $\xi_g$ by multiplication by $\det h$. In particular, $\xi_g$ is $G$--invartiant if $G \subset \SL(3,\CC)$ and is not $G$--invariant if $-\id \in G$.

If $\Fix(g) \cong \CC$, we have $f^g = \widetilde x^4$ where $\widetilde x$ is a coordinate of $\Fix (g)$. This gives $\B'_g \cong \CC \langle [1], [\widetilde x], [\widetilde x^2] \rangle \xi_g$. Let $h \in Z(g)$ act on $\widetilde x$ by $\rho \in \CC^\ast$. Namely $h(\widetilde x) = \rho \widetilde x$. Then we have $\rho_{h,g} = \det h / \rho$ by the same argument as in proof of Proposition~\ref{prop: g--action on itself}. This gives
\begin{equation}\label{eq: multiples of the centralizers}
h^\ast ( [\widetilde x ^k] \xi_g) = \frac{\det h}{\rho^{k+1}} \cdot [\widetilde x ^k] \xi_g. 
\end{equation}
In order to compute the direct sum contribution of $g$ to $\ccHH^*(f,G)$ it remains to check the multiples $\rho$ of all the centralizers $Z(g)$.

\subsubsection{Consider $g \in G$ with $\det g = -1$}\label{sec: minus one det two dim fix}
The fixed locus of $g$ can be either $0$, $1$--dimensional or $2$--dimensional. The first two cases are treated exactly in the same way as above and we concentrate on the last one. Let $\widetilde x_1, \widetilde x_2$ be the coordinates on $\Fix(g)$. Then $f^g$ is quasihomogeneous with weights $(1/4,1/4)$. After some rescaling of the variables $f^g$ can either be $\widetilde x_1^4 + \widetilde x_2^4$, $\widetilde x_1^3 \widetilde x_2 + \widetilde x_2^4$ or $\widetilde x_1^3 \widetilde x_2 + \widetilde x_2^3 \widetilde x_1$.

We have $\B'_g \cong \CC^9$ generated by the classes $[\widetilde x_1^{a_1} \widetilde x_2^{a_2}]$ for some pairs $(a_1,a_2)$. Since $\det g = -1$, we have $g^\ast (\xi_g) = - \xi_g$ and $g^\ast ([\widetilde x_1^{a_1} \widetilde x_2^{a_2}] \xi_g) = - [\widetilde x_1^{a_1} \widetilde x_2^{a_2}]\xi_g$. 
This shows that $(\B'_g)^G = 0$.

\subsubsection{Consider $g = \id$}
While computing the vector space $\ccHH^*(f,G)$ one has to find the $G$--invariants of $\Jac(f) \xi_\id$. We have $h^\ast (\xi_\id) = \xi_\id$ and it remains to compute the $G$--invariants of $\Jac(f)$ itself. This goal can be achieved by symmetrization procedure. Namely, if $\lbrace \phi_k(\bx) \rbrace_{k=1}^\mu$ are the monomials whose classes generate $\Jac(f)$, then the classes of $\lbrace \sum_{g \in G} \phi_k(g \cdot \bx) \rbrace_{k=1}^\mu$ generate $(\Jac(f))^G$. However, some of these classes might be zero or linearly dependant.

The basis monomials of $\phi_\bullet$ can be taken to be $x_1^{a_1}x_2^{a_2}x_3^{a_3}$, such that $\sum_k a_k \le 6$ and $0 \le a_k \le 2$. Also $[\hess(f)] = 756 [(x_1x_2x_3)^2]$.
Assign the grading $\sum_k a_k$ to each of these monomials. The vector space $\Jac(f)$ decomposes into the direct sum of the graded pieces. The dimensions of these pieces can be composed into the following vector
\[
    1,\ 3, \ 6, \ 7, \ 6, \ 3, \ 1.
\]
This vector is symmetric by its middle point --- the $4$--th component because the pairing $\eta_f$ is non--degenerate.

The grading introduced is preserved under the coordinate-wise action of $g \in \GL_f$. Similarly $(\Jac(f))^G$ decomposes into the direct sum of the graded pieces and the dimensions of these pieces can be written by a vector. This vector will be symmetric by its middle point due to Proposition~\ref{prop: Jacf invariants}.

\section{Admissible symmetry groups}\label{section: computations}

This section gathers the computations of $\ccHH^*(f,G)$ for a Klein quartic polynomial $f$ and all admissible symmetry groups $G \subset \GL_f$. Namely, the groups $G$, such that $\det g = \pm 1$ for any $g \in G$.

\subsection{The subgroups}

The group $\SL(f)$ is generated by the following $\SL(3,\CC)$ elements
\begin{align*}
R=\frac{\sqrt{-1}}{\sqrt{7}} \left(
\begin{array}{ccc}
 \zeta -\zeta ^6 & \zeta ^2-\zeta ^5 & \zeta ^4-\zeta ^3 \\
 \zeta ^2-\zeta ^5 & \zeta ^4-\zeta ^3 & \zeta -\zeta ^6 \\
 \zeta ^4-\zeta ^3 & \zeta -\zeta ^6 & \zeta ^2-\zeta ^5 \\
\end{array}
\right),
\
 T=\left(
\begin{array}{ccc}
 0 & 1 & 0 \\
 0 & 0 & 1 \\
 1 & 0 & 0 \\
\end{array}
\right),
\
S=\left(
\begin{array}{ccc}
 \zeta ^4 & 0 & 0 \\
 0 & \zeta ^2 & 0 \\
 0 & 0 & \zeta  \\
\end{array}
\right).
\end{align*}
where $\zeta = \exp\left(2 \pi \sqrt{-1}/7\right)$. This was observed already by F.Klein~\cite{K79}.
The following list gives the classification of the subgroups of $\SL(f)$ up to conjugation

\begin{enumerate}
 \item[(a)] 8 conjugate elementary abelian groups of order 7,
 \item[(b)] 28 conjugate cyclic groups of order 3,
 \item[(c)] 21 conjugate cyclic groups of order 4,
 \item[(d)] 21 conjugate cyclic groups of order 2,
 \item[(e)] two classes of 7 conjugate dihedral abelian Klein 4-groups of order 4,
 \item[(f)] 28 dihedral nonabelian groups of order 6,
 \item[(g)] 21 dihedral nonabelian groups of order 8,
 \item[(h)] 8 nonabelian groups of order 21,
 \item[(i)] two classes of 7 nonabelian conjugates of the symmetric group of degree 4,
 \item[(j)] two classes of 7 nonabelian conjugates of the alternating group of degree 4.
\end{enumerate}

For any group $G$ from the list above we also consider its extension by $\ZZ/2\ZZ$ obtained by adding an element $-\id \in \GL(3,\CC)$. Denote such group by $\widehat G$. Obviously, if $g_1,\dots,g_r$ are all the representatives of the conjugacy classes of $G$, then $\pm g_1,\dots, \pm g_r$ are the representatives of the conjugacy classes of $\widehat G$.

\subsection{Presentation of the results}

In the following computations we make use of different statements of Section~\ref{sec: phase space} without quoting them each time. In particular, we make use of Eq.~\eqref{eq: Bfg via centralizers} to find $\ccHH^*(f,G)$ and Section~\ref{sec: calculus-1},\ref{sec: calculus-2} to compute the $G$--invariants. 
In particular, for any $G$ beneath we find the basis of $\ccHH^*(f,G)$.

In order to apply Eq.~\eqref{eq: Bfg via centralizers} we compute $\C^G$ and the centralizers $Z(g)$ for any representative $g \in \C^G$. Following Section~\ref{section: det 1 g in klein} for $g$, such that $\det(g) = 1$ we need to compute the \textit{multiples of the centralizers} as in Eq.~\eqref{eq: multiples of the centralizers} for all $h \in Z(g)$. We will only list the different values of these multiples in what follows.

The identity sector is often of the highest dimension. We compute its polynomial basis every time. Note that this basis can always be chosen to be homogeneous in the polynomial degree. We provide what we call the \textit{dimension vector} --- the length $7$ vector consisting of the number of the degree $0,\dots,6$ basis polynomials.

For the $\ZZ/2\ZZ$ extended groups $\widehat G$ only the computations of $\widehat V_4$ and $\widehat \SL_f$ are given explicitly. The computations of all the other groups are derived easily by using the statements of Section~\ref{sec: phase space}.

\subsection{The group $\SL_f$}
Let $G := \langle R,T,S \rangle$ be the full group $\SL_f$. It is well-known to have 6 conjugacy classes of orders $1, 24, 24, 21, 42, 56$. We have
\[
    \C^G = \lbrace \id, S, S^3, R, RS^3, T \rbrace.
\]
The centralizers are $Z(S) = Z(S^3) = \langle S \rangle$, $Z(T) = \langle T \rangle$, $Z(R) = \langle TRSRS^5, TRS^3RS^6 \rangle$ and $Z(RS^3) = \langle RS^3 \rangle$.

The elements $S$ and $S^3$ don't have $1$ as an eigenvalue, while all other do. The multiples of the centralizers are $\pm 1$ for $R$, and all equal to $1$ for $T$ and $RS^3$. This gives
\begin{align}
    & \B_{S} = \CC \langle \xi_{S} \rangle, \ \B_{S^3} = \CC \langle \xi_{S^3} \rangle, \ \B_R = \CC \langle [\widetilde x] \xi_R \rangle
    \\
    & \B_{RS^3} = \CC\langle \xi_{RS^3}, [\widetilde x] \xi_{RS^3}, [\widetilde x^2]\xi_{RS^3} \rangle, \quad
    \B_T = \CC\langle \xi_T, [\widetilde x] \xi_T, [\widetilde x^2]\xi_T \rangle
\end{align}

The identity sector $\B_\id$ is spanned by
\[
    1 \xi_\id \quad \text{and} \quad [5 x_2^2 x_3^2 x_1^2 -x_3 x_1^5-x_1 x_2^5-x_2 x_3^5]\xi_\id.
\]
Both classes are $G$--invariant and non--zero in $\Jac(f)$.

$\ccHH^*(f,G)$ has $2 + 9 = 11$ basis elements in total.

\subsection{The group (a)}
Let $G$ be generated by $S$. This is an order $7$ group and $\Fix(g) = 0$ for any nonidentical element. This gives $\B_g = \CC \cdot \xi_g$ for any such $g$.

In the identity sector the space $\B_\id$ is spanned by the elements
\begin{align*}
    \xi_\id, \quad [x_1x_2x_3] \xi_\id, \quad [(x_1x_2x_3)^2] \xi_\id.
\end{align*}

$\ccHH^*(f,G)$ has $3 + 6 = 9$ basis elements in total.

\subsection{The group (b)}
Let $G$ be generated by $T$. We have $\C^G = \lbrace \id, T, T^2\rbrace$ and 1 is an eigenvalue of $T,T^2$. This gives
\[
    \B_g = \CC \langle \xi_g, [(x_1+x_2+x_3)] \xi_g, [(x_1+x_2+x_3)^2] \xi_g \rangle,
    \quad g = T,T^2.
\]

The identity sector basis is given by $11$ elements $[\phi(x)] \xi_\id$ with $\phi(x)$ taken from the list below
\begin{align*}
    & 1, \quad x_1+x_2+x_3, \quad x_1^2+x_2^2+x_3^2, x_2 x_3+x_1x_2+x_1x_3, \quad x_3 x_1^2 + x_2^2 x_1 +x_2 x_3^2, x_1 x_2 x_3,
    \\
    & x_2 x_1^2+x_3^2 x_1+x_2^2 x_3, \quad x_1 x_2 x_3 \left(x_1+x_2+x_3\right), \ x_1^2x_2^2+x_1^2x_3^2 +x_2^2 x_3^2,
    \\
    & x_1 x_2 x_3 \left(x_2 x_3+x_1 x_2+x_1x_3\right), \quad x_1^2 x_2^2 x_3^2.
\end{align*}
The corresponding dimension vector reads
\[
    1, \ 1, \ 2, \ 3, \ 2, \ 1, \ 1.
\]

$\ccHH^*(f,G)$ has  $11 + 2\cdot 3 = 17$ basis elements in total.

\subsection{The group (c)}
Let $G$ be generated by $g := RSRS^5$. The generator has $1$ as an eigenvalue, $\C^G = G$ and 
\[
    \B_{g^k} \cong \CC \langle [1] \xi_{g^k}, [\widetilde x ] \xi_{g^k}, [\widetilde x^2 ] \xi_{g^k}\rangle , \quad k =1,2,3.
\]

The identity sector basis is given by $9$ elements $[\phi(x)] \xi_\id$ with the dimension vector
\[
    1, \ 1, \ 2, \ 1, \ 2, \ 1, \ 1.
\]
The polynomials $\phi(x)$ can be taken to be the symmetrizations of
\[
    1, \quad x_1, \quad x_3^2, \ x_2 x_3, \quad x_2 x_3^2, \quad x_1^2 x_2^2, \ x_1^2 x_2 x_3, \quad x_1(x_2x_3)^2, \quad (x_1x_2x_3)^2.
\]
$\ccHH^*(f,G)$ has  $9 + 3\cdot 3 = 18$ basis elements in total.

\subsection{The group (d)}
Let $G$ be generated by $g = RT$. The generator has $1$ as an eigenvalue.
We have $\C^G = G$ and $\B_g \cong \CC^3$.

The identity sector basis is given by $15$ elements $[\phi(x)] \xi_\id$ with the dimension vector
\[
    1, \ 1, \ 4, \ 3, \ 4, \ 1, \ 1.
\]
The polynomials $\phi(x)$ can be taken to be the symmetrizations of
\[
    1, \ x_1, \ x_3^2, x_2 x_3, x_2^2, x_1 x_3, \ x_1 x_2 x_3, x_1 x_3^2, x_2 x_3^2, \ x_1^2 x_2^2, x_1^2 x_2 x_3, x_1^2 x_3^2, x_1x_2^2x_3, \ x_1(x_2x_3)^2, \ (x_1x_2x_3)^2.
\]

$\ccHH^*(f,G)$ has  $15 + 3 = 18$ basis elements in total.

\subsection{The group (e) - Klein 4-group}
Let $G := \{\id,RS^2RS,SRS^6,S^2RS^3RS \}$. It's not hard to see that $G$ is indeed homomorphic to a Klein four-group.  All nontrivial group elements have eigenvalues $(1,-1,-1)$.

We have $\C^G = G$ and the centralizers being all $G$. For the nonidentical elements $g$, the multiples of the centralizers are $\pm 1$ and therefore
\[
    \B_g = \CC \langle [\widetilde x] \rangle \xi_g \quad \forall g \in G \backslash \{\id \}.
\]

The identity sector basis is given by $9$ elements $[\phi(x)] \xi_\id$ with the dimension vector
\[
    1, \ 0, \ 3, \ 1, \ 3, \ 0, \ 1.
\]
The polynomials $\phi(x)$ can be taken to be the symmetrizations of
\[
    1, \quad x_3^2, \ x_2 x_3, \ x_2^2, \quad x_2 x_3^2, \quad x_1^2 x_2^2, \ x_1^2 x_2 x_3, \ x_1^2 x_3^2, \quad (x_1x_2x_3)^2.
\]

$\ccHH^*(f,G)$ has  $9 + 3 = 12$ basis elements in total.

\subsection{The group (f) - $D_6$}
Let $G$ be generated by $a = T$ and $b = R$.

We have $\C^G = \{\id, a, b\}$. All these elements have $1$ as an eigenvalue. The corresponding centralizers are $Z(a) = \langle a \rangle$, $Z(b) = \langle b\rangle$.

The multiples of the centralizers are all equal to $1$ in both cases. Due to this we have
\[
 \B_{a} = \CC \langle \xi_{a},[\widetilde x] \xi_{a},[\widetilde x^2] \xi_{a} \rangle, \quad
 \B_{b} = \CC \langle \xi_{b}, [\widetilde x] \xi_{b}, [\widetilde x^2] \xi_{b} \rangle.
\]

The identity sector basis is given by $7$ elements $[\phi(x)] \xi_\id$ with the dimension vector
\[
    1, \ 0, \ 2, \ 1, \ 2, \ 0, \ 1.
\]
The polynomials $\phi(x)$ can be taken to be the symmetrizations of
\[
    1, \quad x_3^2, \ x_2 x_3, \quad x_1x_2x_3, \quad x_1^2 x_2^2, \ x_1^2 x_2 x_3, \quad (x_1x_2x_3)^2.
\]

$\ccHH^*(f,G)$ has  $7 + 2\cdot 3= 13$ basis elements in total.

\subsection{The group (g) - $D_8$}
Let $G$ be generated by $a = RS^3$ and $b = RS^2RS$.

We have $\C^G = \{\id, a^2, b, ab, a\}$. All these elements have $1$ as en eigenvalue. The corresponding centralizers are $Z(a^2) = G$, $Z(b) = \langle a^2,b\rangle$, $Z(ab) = \langle a^2,ab\rangle$, $Z(a) = \langle a \rangle$.

The multiples of the centralizers are $\pm 1$ for $a^2$, $b$ and $ab$. The multiples of the centralizers for $a$ are $\pm \sqrt{-1},1$. Due to this we have
\[
 \B_{a^2} = \CC \langle [\widetilde x] \xi_{a^2} \rangle, \ \B_{b} = \CC \langle [\widetilde x] \xi_{b} \rangle, \ \B_{ab} = \CC \langle [\widetilde x] \xi_{ab} \rangle, \ \B_{a} = 0.
\]

The identity sector basis is given by $6$ elements $[\phi(x)] \xi_\id$ with the dimension vector
\[
    1, \ 0, \ 2, \ 0, \ 2, \ 0, \ 1.
\]
The polynomials $\phi(x)$ can be taken to be the symmetrizations of
\[
    1, \quad x_3^2, \ x_2 x_3, \quad x_1^2 x_2^2, \ x_1^2 x_2 x_3, \quad (x_1x_2x_3)^2.
\]

$\ccHH^*(f,G)$ has  $6 + 3 = 9$ basis elements in total.

\subsection{The group (h)}
Let $G$ be generated by $T$ and $S$.

We have $\C^G = \{\id, T, T^2, S, S^2\}$. Both $S$ and $S^2$ do not have $1$ as an eigenvalue and therefore give one--dimensional sectors. The elements $T$ and $T^2$ both have $1$ as an eigenvalue ans the centralizers in both cases are $\langle T \rangle \subset G$. We have
\[
    \B_{S} = \CC \cdot \xi_S, \ \B_{S^2} = \CC \cdot \xi_{S^2}, \
    \B_{T} = \CC \langle \xi_T,[\widetilde x] \xi_T,[\widetilde x^2] \xi_T \rangle, \
    \B_{T^2} = \CC \langle \xi_{T^2},[\widetilde x] \xi_{T^2},[\widetilde x^2] \xi_{T^2} \rangle
\]

The identity sector basis is given by $3$ elements $[\phi(x)] \xi_\id$ with the dimension vector
\[
    1, \ 0, \ 0, \ 1, \ 0, \ 0, \ 1.
\]
The polynomials $\phi(x)$ can be taken to be
\[
    1, \ x_1 x_2 x_3, \ (x_1 x_2 x_3)^2.
\]

$\ccHH^*(f,G)$ has  $3 + 8 = 11$ basis elements in total.

\subsection{The group (i)}
Consider
\[
 c_2 := TS^5RS^6, \quad c_3 := TS^4 \quad c_4 := TRS^2RS^3, \quad v_4 := T^2RS^6RS^4.
\]
These elements have orders $2,3,4,2$ respectively. The first three represent 2--cycle, 3--cycle and 4--cycle in $S_4$ embedded in $\SL_f$ as a group $G := \langle c_2,c_4 \rangle$. The element $v_4$ above is the Klein 4--group element of $G$.

We have $\C^G = \{\id, c_2, c_3, c_4, v_4\}$. All the elements listed have $1$ as an eigenvalue.
The centralizers are $Z(c_2) = \lbrace \id, RSRS^2, RS^4RS^4,RS^5RS^6\rbrace$ giving the multiples $\pm 1$;
$Z(c_3) = \lbrace \id, TS^4,T^2S^5\rbrace$ giving the multiples $1$;
$Z(c_4) = \lbrace \id, TRSRS^5, TRS^2RS^3, RS^6RS \rbrace$ giving the multiples $1$;
$Z(v_4) = \{ \id, RS^4RS^3, R,T^2RS^2RS^6, T^2RS^3RS, T^2RS^5RS^2, T^2RS^6RS^4, TRSRS^5 \}$ giving the multiples $\pm 1$.
We have
\[
    \B_{c_3} = \CC \langle \xi_{c_3}, [\widetilde x_1] \xi_{c_3}, [\widetilde x_1^2] \xi_{c_3} \rangle, \quad
    \B_{c_4} = \CC \langle \xi_{c_4}, [\widetilde x_1] \xi_{c_4}, [\widetilde x_1^2] \xi_{c_4} \rangle, \quad
    \B_{c_2} = \CC \langle [\widetilde x_1] \xi_{c_2} \rangle, \quad
    \B_{v_4} = \CC \langle [\widetilde x_1] \xi_{v_4} \rangle.
\]

The identity sector basis is given by $4$ elements $[\phi(x)] \xi_\id$ with the dimension vector
\[
    1, \ 0, \ 1, \ 0, \ 1, \ 0, \ 1.
\]
The polynomials $\phi(x)$ can be taken to be
\[
    1, \ x_3^2, \ (x_1 x_2)^2, \ (x_1 x_2 x_3)^2.
\]

$\ccHH^*(f,G)$ has  $4 + 8 = 12$ basis elements in total.

\subsection{The group (j)}
Consider the subgroup $A_4 \subset S_4$ for $S_4$ emebedded in $\SL_f$ as in the case above. Let the notation be as above.

We have $\C^G = \{\id, c_3, v_4\}$. All the elements listed have $1$ as an eigenvalue.
The centralizers are 
$Z(v_4) = \lbrace \id, T^2RS^3RS, T^2RS^5RS^2, T^2RS^6RS^4\rbrace $ giving the multiples $\pm 1$;
$Z(c_3) = \lbrace \id, TS^4,T^2S^5\rbrace$ giving the multiple $1$ exclusively.
We have

\[
    \B_{c_4} = \CC \langle \xi_{c_4}, [\widetilde x_1] \xi_{c_4}, [\widetilde x_1^2] \xi_{c_4} \rangle, \quad
    \B_{v_4} = \CC \langle [\widetilde x_1] \xi_{v_4} \rangle.
\]

The identity sector basis is given by $10$ elements $[\phi(x)] \xi_\id$ with the dimension vector
\[
    1, \ 0, \ 3, \ 2, \ 3, \ 0, \ 1.
\]
The polynomials $\phi(x)$ can be taken to be
\[
    1, \quad x_3^2, \ x_2 x_3, \ x_2^2, \quad x_1 x_3^2, \ x_2 x_3^2, \quad (x_1x_2)^2, \ x_1^2x_2x_3, \ (x_1x_3)^2, \quad (x_1 x_2 x_3)^2.
\]

$\ccHH^*(f,G)$ has  $10 + 4 = 14$ basis elements in total.

\subsection{$\ZZ/2\ZZ$--extension of $\SL_f$}
In this section denote $\widehat G := \lbrace \pm g  \ | \ g \in G\rbrace$ with $g \in \SL_f$.
\[
    \C^{\widehat G} = \lbrace \pm\id, \pm S, \pm S^3, \pm R, \pm RS^3, \pm T \rbrace.
\]

We have $\B_{\pm S} = \B_{\pm S^3} = \B_{-\id} = 0$ because the fixed locus of these elements is $0$ and the respective generator $\xi_g$ is not preserved by the action of $-\id$.

The fixed loci of $R$ and $-R$ are $1$ and $2$--dimensional respectively. 
\[
\B_R \cong \left( \CC \langle [\widetilde x] \xi_R \rangle \right)^{\langle -\id \rangle} = 0
\] 
and $\B_{-R} = 0$ as explained in Section~\ref{sec: minus one det two dim fix}.

For $g \in \lbrace \pm RS^3, \pm T \rbrace$ the fixed locus is $1$--dimensional giving
\begin{align}
    \B_{g} \cong \left( \CC \langle \xi_{g}, [\widetilde x] \xi_{g}, [\widetilde x^2] \xi_{g} \rangle \right)^{\langle -\id \rangle}
    \cong \CC \langle \xi_{Rg}, [\widetilde x^2] \xi_{g} \rangle.
\end{align}

The identity sector is spanned by
\[
    1 \xi_\id \quad \text{and} \quad [5 x_2^2 x_3^2 x_1^2 -x_3 x_1^5-x_1 x_2^5-x_2 x_3^5]\xi_\id.
\]

We conclude that $\ccHH^*(f,\widehat G)$ is of dimension $2\cdot 4 + 2 = 10$.

\subsection{$\ZZ/2\ZZ$--extension of the Klein 4-group}
In this section denote $\widehat G := \lbrace \pm g  \ | \ g \in G\rbrace$ with $G := \{\id,RS^2RS,SRS^6,S^2RS^3RS \}$.

Then for any $g \in G$ the eigenvalues of $g$ are $(1,-1,-1)$ and those of $-g$ are $(-1,1,1)$. We get $\Fix(-g) \cong \CC^2$, however the sector $\B_{-g} = 0$ as explained in Section~\ref{sec: minus one det two dim fix}. Similarly $\B_g = ( \CC \langle [\widehat x] \rangle \xi_g )^{\widehat G} =0$ because $[\widehat x] \xi_g$ is not invariant under the action of $-\id$.

We conclude that
\[
    \ccHH^*(f, \widehat G) = \left( \Jac(f) \xi_\id \right)^{\widehat G} \cong \left( \Jac(f) \right)^{\widehat G} \cdot \xi_\id.
\]
By using the computations done for the group $G$ we have
\[
    \ccHH^*(f,\widehat G) \cong \CC\langle [1], [x_3^2],[x_2 x_3],[x_2^2], [x_1^2 x_2^2],[x_1^2 x_2 x_3],[x_1^2 x_3^2], [(x_1x_2x_3)^2]\rangle \xi_\id.
\]

We conclude that as a $\CC$--vector space $\ccHH^*(f,\widehat G) \cong \CC^{2 + 2\cdot 3} \cong H^\ast(S_3)$ --- the cohomology of the genus $3$ Riemann surface. However this cannot be an algebra isomorphism because $H^\ast(S_3)$ is not a commutative algebra while the product of $\ccHH^*(f,\widehat G)$ above is commutative.
It is $8$--dimensional with the product restricted from $\Jac(f)$. The only non--zero products that do not involve the element $1 \xi_\id$ are
\begin{align*}
    & [x_2^2]\xi_\id \circ [x_1^2 x_3^2] \xi_\id = [(x_1x_2x_3)^2] \xi_\id,
    \quad
    [x_3^2]\xi_\id \circ [x_1^2 x_2^2] \xi_\id = [(x_1x_2x_3)^2] \xi_\id
    \\
    & \qquad [x_2 x_3] \xi_\id \circ [x_1^2 x_2 x_3] \xi_\id = [(x_1x_2x_3)^2] \xi_\id.
\end{align*}
This suggests that the product structure for the B--model state space might be defined differenly for the algebra isomorphism to hold true.

%%%%%%%%%%%%%%%%%%%%%%%%%%%%%%%%%%%%%%%%%%%%%%%%%%%%%%%%%%%%%%%%
%%%%%%%%%%%%%%%%%%%%%%%%%%%%%%%%%%%%%%%%%%%%%%%%%%%%%%%%%%%%%%%%

\end{document}